\newcommand{\e}{\varepsilon}
\newcommand{\C}{\mathds{C}}
\newtheorem{thm}{Theorem}[section]
\newtheorem{lem}[thm]{Lemma}
\theoremstyle{definition}
\theoremstyle{remark}
\title{On Ramanujan's prime counting inequality}
\author{Christian Axler}
\address{Institute of Mathematics\\ Heinrich Heine University Duesseldorf\\
40225 Duesseldorf, Germany}
\email{christian.axler@hhu.de}
\date{\today}
\subjclass[2010]{Primary 11N05; Secondary 11M26}
\keywords{Prime counting function, Ramanujan's prime counting inequality, Riemann hypothesis}
\begin{document}

\begin{abstract}
In this paper, we give a new upper bound for the number $N_{\mathcal{R}}$ which is 
defined to be the smallest positive integer such that a certain inequality due to Ramanujan involving the prime counting function $\pi(x)$ holds for every $x \geq N_{\mathcal{R}}$.
\end{abstract}

\maketitle


\section{Introduction}

Let $\pi(x)$ denote the number of primes not exceeding $x$. Since there are infinitely many primes, we have $\pi(x) \to \infty$ as $x \to \infty$. Gauss \cite{gauss} stated a conjecture concerning an asymptotic behavior of $\pi(x)$, namely
\begin{equation}
\pi(x) \sim \text{li}(x) \quad\quad (x \to \infty), \tag{1.1} \label{1.1}
\end{equation}
where the \emph{logarithmic integral} $\text{li}(x)$ is defined as
\begin{equation}
\text{li}(x) = \int_0^x \frac{\text{d}t}{\log t} = \lim_{\e \to 0} \left \{ \int_{0}^{1-\e}{\frac{\text{d}t}{\log t}} + \int_{1+\e}^{x}{\frac{\text{d}t}{\log 
t}} \right \}, \tag{1.2} \label{1.2}
\end{equation}
where $\log x$ is the natural logarithm of $x$. Hadamard \cite{hadamard1896} and de la Vall\'{e}e-Poussin \cite{vallee1896} independently proved the asymptotic formula \eqref{1.1} which is known as the \textit{Prime Number Theorem}. In a later paper, \cite{vallee1899}
where the existence of a zero-free region for the Riemann zeta-function $\zeta(s)$ to the left of the line $\text{Re}(s) = 1$ was proved, de la Vall\'{e}e-Poussin also estimated the error term in the Prime Number Theorem by showing
\begin{equation}
\pi(x) = \text{li}(x) + O(x \exp(-\delta_0\sqrt{\log x})) \tag{1.3} \label{1.3}
\end{equation} 
as $x \to \infty$, where $\delta_0$ is a positive absolute constant. Integration by parts in \eqref{1.3} implies that for every positive integer $n$, we have
\begin{equation}
\pi(x) = \frac{x}{\log x} + \frac{x}{\log^2 x} + \frac{2x}{\log^3 x} + \frac{6x}{\log^4 x} + \ldots + \frac{(n-1)! x}{\log^nx} + O \left( \frac{x}{\log^{n+1} x} \right) \tag{1.4} \label{1.4}
\end{equation}
as $x \to \infty$. In one of his notebooks (see Berndt \cite{berndt1994}), 
Ramanujan used \eqref{1.4} with $n = 5$ to find that
\begin{displaymath}
\pi(x)^2 - \frac{ex}{\log x} \pi \left( \frac{x}{e} \right)  = - \frac{x^2}{\log^6x} + O \left( \frac{x}{\log^7 x} \right) \tag{1.5} \label{1.5}
\end{displaymath}
as $x \to \infty$ and concluded that the inequality
\begin{equation}
\pi(x)^2 < \frac{ex}{\log x} \pi \left( \frac{x}{e} \right) \tag{1.6} \label{1.6}
\end{equation}
holds for all sufficiently large values of $x$. The inequality \eqref{1.6} is called \textit {Ramanujan's prime counting inequality}.
Recently, Hassani \cite[Corollary 1]{hassani2} found the full asymptotic expansion in \eqref{1.5} by showing that for every integer $n \geq 4$, one has
\begin{displaymath}
\pi(x)^2 - \frac{ex}{\log x} \pi \left( \frac{x}{e} \right) = x^2 \sum_{k=4}^n \frac{d_k}{\log^{k+2}x} + O \left(\frac{x^2}{\log^{n+3} x} \right)
\end{displaymath}
as $x \to \infty$, where
\begin{displaymath}
d_k = \sum_{j=0}^k j! \left( (k-j)! - \binom{k}{j} \right).
\end{displaymath}
A legitimate question is to find the smallest positive integer $N_{\mathcal{R}}$ so that the inequality \eqref{1.6} holds for every real $x \geq N_{\mathcal{R}}$.
The first result made in the search for $N_{\mathcal{R}}$ is based on the assumption that the Riemann hypothesis is true. The Riemann zeta function is for all complex numbers $s$ with $\text{Re}(s) > 1$ defined as
\begin{displaymath}
\zeta(s) = \sum_{n=1}^s \frac{1}{n^s}.
\end{displaymath}
It is well known that the Riemann zeta function is a meromorphic function on the whole complex plane, which is holomorphic everywhere except for a simple pole at $s = 1$.
The Riemann zeta function satisfies the functional equation
\begin{displaymath}
\zeta(s)= 2^s \pi^{s-1} \sin \left({\frac{\pi s}{2}} \right) \Gamma(1-s)\zeta (1-s),
\end{displaymath}
where $\Gamma(s)$ is the gamma function. This is an equality of meromorphic functions valid on the whole complex plane. Due to the zeros of the sine function, the functional equation implies that $\zeta(s)$ has outside the set $\{ s \in \C \mid 0 \leq \text{Re}(s) \leq 1 \}$ a simple zero at each even negative integer $s = -2n$, known as the \textit{trivial zeros}. The \textit{nontrivial zeros}, i.e. the zeros in the set $\{ s \in \C \mid 0 \leq \text{Re}(s) \leq 1 \}$, have attracted far more attention because not only is their distribution far less well known, but their study also yields important results concerning primes and related objects in number theory.
The \textit{Riemann hypothesis} asserts that the real part of every nontrivial zero of the Riemann zeta function is $1/2$.
To this day, the Riemann hypothesis is considered one of the greatest unsolved problems in mathematics.
Under the assumption that the Riemann hypothesis is 
true (RH), Hassani \cite[Theorem 1.2]{hassani2012} has given the upper bound 
\begin{displaymath}
RH \; \Rightarrow \; N_{\mathcal{R}} \leq 138\,766\,146\,692\,471\,228.
\end{displaymath}
Dudek and Platt \cite[Lemma 3.2]{dudekplatt} refined Hassani's result by showing that
\begin{equation}
RH \; \Rightarrow \; N_{\mathcal{R}} \leq 1.15 \cdot 10^{16}. \tag{1.7} \label{1.7}
\end{equation}
Wheeler, Keiper and Galway (see Berndt \cite[p.\:113]{berndt1994}) attempted to determine the value of $N_{\mathcal{R}}$, but they failed. Nevertheless, Galway found that 
the largest prime up to $10^{11}$ for which the inequality \eqref{1.6} fails is $x=38\,358\,837\,677$. Hence
\begin{displaymath}
N_{\mathcal{R}} > 38\,358\,837\,677.
\end{displaymath}
Dudek and Platt \cite[Theorem 1.3]{dudekplatt} showed by computation that the largest (not necessarily prime) integer counterexample of \eqref{1.6} up to $x = 10^{11}$ occurs at $x = 38\,358\,837\,682$ and that the inequality \eqref{1.6} holds for every $x$ satisfying $10^{11} \leq x \leq 1.15 \times 10^{16}$. If we combine this result with \eqref{1.7}, it turns out that
\begin{displaymath}
RH \; \Rightarrow \; N_{\mathcal{R}} = 38\,358\,837\,683.
\end{displaymath}
Based on a result of Büthe \cite[Theorem 2]{buethe}, the present author \cite[Theorem 3]{axler} extends the computation of Dudek and Platt by showing that Ramanujan's prime counting inequality \eqref{1.6} holds unconditionally for every $x$ such that $38\,358\,837\,683 \leq x \leq 10^{19}$. This was improved by Platt and Trudgian \cite[Theorem 2]{plattt}. They showed that Ramanujan's prime counting inequality \eqref{1.6} holds unconditionally for every $x$ satisfying $38\,358\,837\,683 \leq x \leq e^{58}$. Recently, Johnston \cite[Theorem 5.1]{johnston} utilized a simple (but computationally intensive) method to show that the inequality \eqref{1.6} holds unconditionally for every $x$ with $38\,358\,837\,683 \leq x \leq e^{103}$.

In another direction, Dudek and Platt \cite[Theorem 1.2]{dudekplatt} claimed to give an upper bound for $N_{\mathcal{R}}$ which does not depend on the assumption that the Riemann hypothesis is true, namely $N_{\mathcal{R}} \leq \exp(9658)$.
After the present author identified an error in the proof given in \cite[Theorem 1.2]{dudekplatt}, he proved \cite[Theorem 4]{axler} the even stronger result
\begin{equation}
N_{\mathcal{R}} \leq \exp(9032). \tag{1.8} \label{1.8}
\end{equation}
In the proof of \eqref{1.8}, effective estimates for the prime counting function $\pi(x)$ which hold for all sufficiently large values of $x$ play an important role. Using their effective estimates for $|\vartheta(x)-x|$, where Chebyshev's $\vartheta$-function which is defined by
\begin{equation}
\vartheta(x) = \sum_{p \leq x} \log p, \tag{1.9} \label{1.9}
\end{equation}
where $p$ runs over all primes not exceeding $x$, Platt and Trudgian \cite[Theorem 2]{plattt} fixed the error in the proof given in \cite[Theorem 1.2]{dudekplatt} to show that Ramanujan's prime counting inequality \eqref{1.6} holds unconditionally for every $x \geq \exp(3915)$; i.e. 
\begin{equation}
N_{\mathcal{R}} \leq \exp(3915).  \tag{1.10} \label{1.10}
\end{equation}
Cully-Hugill and Johnston \cite[Corollary 1.6]{cully} used the method investigated by Platt and Trudgian to prove that
\begin{equation}
N_{\mathcal{R}} \leq \exp(3604).  \tag{1.11} \label{1.11}
\end{equation}
Recently, Johnston and Yang \cite[Theorem 1.5]{yang} utilized the same method to improve the last result by showing that
\begin{equation}
N_{\mathcal{R}} \leq \exp(3361).  \tag{1.12} \label{1.12}
\end{equation}
In this paper we will also make use of this method combined with a recent result concerning the difference of $\vartheta(x)$ and $x$ due to Fiori, Kadiri, and Swidinsky \cite{kadiri} to show the following

\begin{thm} \label{thm101}
Ramanujan's prime counting inequality \eqref{1.6} holds unconditionally for every $x \geq \exp(3158.442)$; i.e.
\begin{displaymath}
N_{\mathcal{R}} \leq \exp(3158.442).
\end{displaymath}
\end{thm}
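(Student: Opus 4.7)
The plan is to adapt the method developed by Platt and Trudgian and refined by Cully-Hugill and Johnston and by Johnston and Yang, but to insert the sharper effective bound for $|\vartheta(x)-x|$ due to Fiori, Kadiri, and Swidinsky \cite{kadiri}. The basic idea is to convert each of $\pi(x)$ and $\pi(x/e)$ into an explicit expression in $x$ and $\log x$ with controlled error terms, and then to verify that the negative main term of order $x^2/\log^6 x$ in $\pi(x)^2 - (ex/\log x)\pi(x/e)$ dominates the errors once $x \geq \exp(3158.442)$.

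First, starting from \eqref{1.9} and applying Abel summation, I would write
\[
\pi(x) = \frac{\vartheta(x)}{\log x} + \int_2^x \frac{\vartheta(t)}{t \log^2 t}\,\mathrm{d}t.
\]
Feeding an effective bound $|\vartheta(t)-t| \leq \eta(t)\, t$ from \cite{kadiri} into this identity, and expanding the leading approximation $\vartheta(t) \approx t$ by repeated integration by parts, I would obtain explicit two-sided estimates of the form
\[
\left| \pi(x) - \sum_{k=1}^{n} \frac{(k-1)!\,x}{\log^k x} \right| \leq R_n(x),
\]
valid on a suitable range of $x$, where $R_n(x)$ combines the tail $n!\,x/\log^{n+1} x$ of the asymptotic expansion \eqref{1.4} with a contribution of order $\eta(x)\, x/\log x$ stemming from the Chebyshev error. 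The same identity applied to $x/e$ provides analogous upper and lower bounds for $\pi(x/e)$.

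Next, I would square the upper estimate for $\pi(x)$ and combine it with the lower estimate for $\pi(x/e)$ to bound the quantity $\pi(x)^2 - (ex/\log x)\pi(x/e)$ from above. After collecting terms, the cancellations encoded in Hassani's formula \eqref{1.5} should leave a leading negative contribution of approximately $-x^2/\log^6 x$, plus explicit error contributions controlled by $R_n$ and $\eta$. Establishing \eqref{1.6} then reduces to verifying that the total error is smaller in absolute value than $x^2/\log^6 x$ for $x \geq \exp(3158.442)$. I would tune the truncation length $n$ and the threshold so as to optimize the final bound; this is where the sharper input from \cite{kadiri} yields the improvement over the threshold $\exp(3361)$ of \cite{yang}.

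The principal obstacle is the bookkeeping. One must track every explicit error term arising from squaring the upper estimate for $\pi(x)$, from the Abel-summation integral, and from the truncated asymptotic expansion, and then solve the resulting transcendental inequality for the smallest admissible threshold. The bound $\eta(x)$ of \cite{kadiri} is typically piecewise, with different functional forms being sharp in different ranges of $\log x$, so the optimization splits into cases that must be merged. Moreover, choosing $n$ too large makes the factorial $n!$ in the remainder blow up, while choosing $n$ too small leaves a tail that, once multiplied by the factor $ex/\log x$ in \eqref{1.6}, is too large compared to $x^2/\log^6 x$; balancing these two competing effects is the crux of pushing the threshold down to $\exp(3158.442)$.
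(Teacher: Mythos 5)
Your strategy is exactly the one the paper follows (and the one going back to Dudek--Platt): feed an effective bound on $|\vartheta(t)-t|$ into the identity \eqref{2.1}, integrate by parts to get two-sided estimates of the shape $x\sum_{k=0}^{4}k!/\log^{k+1}x + c\,x/\log^{6}x$, and then check that the error terms in $\pi(x)^2-\tfrac{ex}{\log x}\pi(x/e)$ are beaten by the main term $-x^2/\log^6 x$. However, as written the proposal proves nothing, because the theorem is a purely quantitative statement and every quantitative ingredient is deferred. Concretely, you never specify the $\vartheta$-bound: the paper needs a \emph{piecewise} function $a(x)$ with $|\vartheta(x)-x|\leq a(x)x/\log^5x$ for all $x\geq 2$ (the Fiori--Kadiri--Swidinsky bound is only used for $x\geq e^{673}$; below that one needs Johnston's and Trudgian's bounds and a trivial bound near $2$, since the integral in \eqref{2.1} runs from $2$), together with the monotonicity of $a$ on $[e^{673},\infty)$ so that $a(t)\leq a(x_1)$ can be pulled out of the tail integral. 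You also never carry out the computation that actually produces the threshold: choosing $x_1=\exp(3157.442)$, evaluating $a(x_1)\approx 1056.77$ and the constants $C_0,C_1$, obtaining $m_a\approx -936.65$ and $M_a\approx 1177.56$, and finally verifying that $\epsilon_{M_a}(x)-\epsilon_{m_a}(x)<\log x$ for $x\geq e x_1=\exp(3158.442)$ (the factor $e$ appears because $\pi(x/e)$ must be estimated at $x/e\geq x_1$). Without these numbers there is no way to arrive at, or even motivate, the constant $3158.442$; the whole content of the theorem lives in this bookkeeping.

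One conceptual point is also off. You present the truncation length $n$ as a tunable parameter to be optimized against the factorial growth of the remainder. In this method $n$ is forced: the main term of $\pi(x)^2-\tfrac{ex}{\log x}\pi(x/e)$ sits at $x^2/\log^6x$ with coefficient exactly $-1$, so one must expand $\pi$ through $24x/\log^5x$ and absorb everything else into a single explicit coefficient at the $x/\log^6x$ level; taking more terms gains nothing because the $\vartheta$-error already contributes at that level. The entire improvement over $\exp(3361)$ comes from shrinking $M_a-m_a$ via the sharper bound of \cite{kadiri}, not from balancing a truncation parameter. The final inequality is essentially $2M_a - m_a + \text{const} < \log x$, which is what pins the threshold to $\exp(3158.442)$.
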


\section{Preliminaries}

The prime counting function $\pi(x)$ and Chebyshev's $\vartheta$-function (cf. \eqref{1.9}) are connected by the 
identity
\begin{equation}
\pi(x) = \frac{\vartheta(x)}{\log x} + \int_{2}^{x}{\frac{\vartheta(t)}{t \log^{2} t}\ \text{d}t}, \tag{2.1} \label{2.1}
\end{equation}
which holds for every $x \geq 2$ (see \cite[Theorem 4.3]{ap}). The method established to prove results like \eqref{1.10}-\eqref{1.12} or Theorem \ref{thm101} goes back to Dudek and Platt \cite{dudekplatt} and is as follows. We start with a function $a$ so that
\begin{equation}
|\vartheta(x)-x| \leq \frac{a(x)x}{\log^5x} \tag{2.2} \label{2.2}
\end{equation}
for every $x \geq x_0$. Then we substitute this inequality into \eqref{2.1} to derive upper and lower bounds for the prime counting function $\pi(x)$ of the form
\begin{equation}
x\sum_{k=0}^4 \frac{k!}{\log^{k+1}x} + \frac{m_a(x)x}{\log^6x} \leq \pi(x) \leq x\sum_{k=0}^4 \frac{k!}{\log^{k+1}x} + \frac{M_a(x)x}{\log^6x} \tag{2.3} \label{2.3}
\end{equation}
for every $x \geq x_1$, where the functions $m_a(x)$ and $M_a(x)$. Using these estimates, Dudek and Platt were able to give an
explicit version of \eqref{1.5}. As already mentioned in the introduction, we also use this method. So, we need to find a function $a$ and a positive real number $x_0$ so that the inequality \eqref{2.2} holds. For this purpose, we set
\begin{displaymath}
R = 5.5666305
\end{displaymath}
and, similar to \cite[p.\:879]{plattt}, we define the function $a: \mathbb{R}_{>0} \to \mathbb{R}$ by
\begin{displaymath}
\frac{a(x)}{\log^5x} = 
\begin{cases}
\displaystyle \frac{2 - \log2}{2} & \text{if $2 \leq x < 599$,} \\
\displaystyle\frac{\log^2x}{8\pi\sqrt{x}} & \text{if $599 \leq x < 1.101 \times 10^{26}$,} \\
\displaystyle \sqrt{\frac{8}{17\pi}} \left( \frac{\log x}{6.455} \right)^{1/4} \exp \left( - \sqrt{\frac{\log x}{6.455}} \right) & \text{if $1.101 \times 10^{26} \leq x < e^{673}$,} \\
\displaystyle 121.0961 \left( \frac{\log x}{R} \right)^{3/2} \exp \left( - 2 \sqrt{\frac{\log x}{R}} \right) & \text{if $x \geq e^{673}$.}
\end{cases}
\end{displaymath}
Then we get the following result concerning Chebyshev's $\vartheta$-function.

\begin{lem} \label{lem201}
For every $x \geq 2$, we have
\begin{displaymath}
|\vartheta(x) - x| \leq \frac{a(x)x}{\log^5x}.
\end{displaymath}
\end{lem}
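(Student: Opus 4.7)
The plan is to verify the inequality separately on each of the four subintervals appearing in the piecewise definition of $a(x)$, citing an existing effective estimate for $|\vartheta(x)-x|$ on each piece and checking that the chosen form of $a(x)/\log^5 x$ dominates it.

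First, on the range $2 \leq x < 599$ the claimed inequality reduces to $|\vartheta(x)-x| \leq (2-\log 2)x/2$. Since $\vartheta$ is a step function that jumps only at primes, $|\vartheta(x)-x|/x$ attains its maximum on $[2,599)$ at a finite list of points (at each prime $p<599$ and just before each such prime), so this can be confirmed by a direct finite computation using tabulated values of $\vartheta(p)$; one expects the extremum to occur at small $x$, where $(2-\log 2)/2 \approx 0.6534$ comfortably dominates $|\vartheta(x)-x|/x$.

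On the second range $599 \leq x < 1.101\times 10^{26}$, the stated bound is exactly the Schoenfeld-type estimate $|\vartheta(x)-x| \leq \sqrt{x}\log^2 x/(8\pi)$. Under RH this is Schoenfeld's classical inequality, and it has been verified unconditionally up to the cutoff $1.101\times 10^{26}$ by B\"uthe, so this piece is immediate from \cite{buethe}. On the third range $1.101\times 10^{26} \leq x < e^{673}$ the bound matches the Platt--Trudgian estimate established in \cite[Theorem 2]{plattt}, and so this piece is also a direct citation. On the fourth range $x \geq e^{673}$ the formula, with the constant $R = 5.5666305$ and the prefactor $121.0961$, is precisely of the Korobov--Vinogradov shape produced by Fiori, Kadiri, and Swidinsky \cite{kadiri}; that reference provides an effective $|\vartheta(x)-x| \leq C (\log x / R)^{3/2} x \exp(-2\sqrt{\log x/R})$ valid for $x$ large, with $R$ coming from the width of the Vinogradov zero-free region, and it remains to confirm that the constants $121.0961$ and $e^{673}$ in the statement are admissible in their theorem.

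The only genuinely delicate step is the matching of constants in the fourth piece: one has to check that after dividing by $\log^5 x$ the Fiori--Kadiri--Swidinsky bound can indeed be written with the displayed prefactor $121.0961$ starting from $x=e^{673}$, and that this dovetails continuously with the Platt--Trudgian bound at $x=e^{673}$ so that no gap opens. This amounts to an elementary calculus comparison of two explicit functions of $\log x$ at a single crossover value, and is where the numerical tuning of the cutoffs $599$, $1.101\times 10^{26}$, and $e^{673}$ comes in. The other three pieces are citation plus (in the first case) finite computation, and the overall lemma then follows by assembling the four cases.
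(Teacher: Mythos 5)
Your overall strategy is exactly the paper's: the lemma is proved by assembling, branch by branch of the definition of $a$, a known effective bound for $|\vartheta(x)-x|$. The substance of such a proof lives entirely in getting the citations and constants right, and two of yours would not close the argument. For the range $599 \leq x < 1.101\times 10^{26}$ the relevant source is Johnston \cite[Corollary 3.3]{johnston}, not B\"uthe: B\"uthe's verification of $|\vartheta(x)-x|\leq \sqrt{x}\log^2 x/(8\pi)$ stops at $10^{19}$, and extending the cutoff to $1.101\times 10^{26}$ is precisely Johnston's contribution, so citing B\"uthe leaves the interval from $10^{19}$ to $1.101\times 10^{26}$ uncovered. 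For the range $1.101\times 10^{26}\leq x< e^{673}$ the bound is Trudgian's \cite[Theorem 1]{trud}, not Platt--Trudgian \cite{plattt}. The fourth piece is correctly attributed to Fiori--Kadiri--Swidinsky \cite[Corollary 14]{kadiri}, but your worry about the pieces ``dovetailing continuously'' at $e^{673}$ is misplaced: $a$ is not continuous and need not be, since the inequality is only required branch by branch. What must be checked there is only that $e^{673}$ lies at or above the validity threshold of \cite[Corollary 14]{kadiri} with the constants $121.0961$ and $R=5.5666305$.

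On the first piece the paper simply declares the bound trivial, while you propose a finite computation; that is the right instinct, but your numerical claim is not correct as stated. On $[2,599)$ the supremum of $(x-\vartheta(x))/x$ is $1-(\log 2)/3\approx 0.769$, attained as $x\to 3^{-}$ (where $\vartheta(x)$ is still $\log 2$), whereas $(2-\log 2)/2\approx 0.653$ is calibrated to give equality only at $x=2$ and is exceeded on all of $(2,3)$. So the constant does not ``comfortably dominate'' near $x=3$; carrying out the computation you describe would reveal this, and the constant on this branch would have to be enlarged (which changes nothing downstream beyond a negligible perturbation of $C_0$ and $C_1$). In short: same architecture as the intended proof, but the second branch as you cite it has a genuine gap of seven orders of magnitude, and the first branch does not survive the check you sketch.
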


\begin{proof}
If $x$ satisfies $2 \leq x < 599$, then the given bound is trivial. The second one was proven by Johnston \cite[Corollary 3.3]{johnston} and the third bound was given by Trudgian \cite[Theorem 1]{trud}. The last bound was recently established by Fiori, Kadiri, and Swidinsky \cite[Corollary 14]{kadiri}.
\end{proof}

We also need the following result on our function $a$.


\begin{lem} \label{lem202}
Let $x_1$ be real number with $x_1 \geq e^{673}$. Then $a_n(x) \leq a_n(x_1)$ for every $x \geq x_1$.
\end{lem}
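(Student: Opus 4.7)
The plan is to verify that the last case of the piecewise definition of $a$ is monotonically non-increasing on $[e^{673},\infty)$; the claim then follows immediately, since for $x_1\geq e^{673}$ and $x\geq x_1$ we are entirely within this final case. (I read the ``$a_n$'' in the statement as a typo for $a$.)

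First, I would make the substitution $u=\log x$ and rewrite the last case of the definition in closed form. Since for $x\geq e^{673}$ we have
\begin{displaymath}
a(x)=121.0961\cdot R^{-3/2}\cdot u^{13/2}\cdot\exp\!\left(-2\sqrt{u/R}\right),
\end{displaymath}
it suffices to show that the real-valued function $g(u)=u^{13/2}\exp(-2\sqrt{u/R})$ is non-increasing for $u\geq 673$. This is a one-variable calculus question.

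Next, I would compute the logarithmic derivative
\begin{displaymath}
\frac{g'(u)}{g(u)}=\frac{13}{2u}-\frac{1}{\sqrt{uR}},
\end{displaymath}
and observe that $g'(u)\leq 0$ if and only if $u\geq 169R/4$. Plugging in $R=5.5666305$ gives the numerical threshold $169R/4\approx 235.22$. Since $673$ comfortably exceeds this threshold, $g$ is strictly decreasing on $[673,\infty)$, and pulling back through $u=\log x$ (which is monotone increasing) yields that $a(x)$ is non-increasing on $[e^{673},\infty)$. The conclusion $a(x)\leq a(x_1)$ for all $x\geq x_1\geq e^{673}$ follows.

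The main obstacle is essentially non-existent here: the argument is purely analytic once the correct case of the piecewise definition is identified. The only thing to be careful about is the small numerical verification that $169R/4<673$, which is immediate with the given value of $R$; and making sure the logarithmic derivative is computed correctly, in particular that the $13/2$ in the polynomial factor (arising from combining the $\log^5 x$ and $(\log x/R)^{3/2}$ factors) correctly dominates the $-1/\sqrt{uR}$ term only for $u$ above that threshold.
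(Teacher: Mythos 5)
Your proposal is correct and follows exactly the paper's approach: the paper's proof simply asserts that a straightforward computation of the derivative shows $a'(x)<0$ for $x\geq e^{673}$, and you carry out precisely that computation (correctly combining the $\log^5x$ factor with $(\log x/R)^{3/2}$ to get the exponent $13/2$, and locating the threshold $u\geq 169R/4\approx 235$ well below $673$). Your reading of ``$a_n$'' as a typo for $a$ is also the right one.
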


\begin{proof}
By a straightforward calculation of the derivative, we see that the inequality $a'(x) < 0$ holds for every $x \geq e^{673}$.
\end{proof}

Now, let $x_1$ be a real number with $x_1 \geq e^{673}$. According to the method we use, we set
\begin{align*}
C_0 &= \int_2^{x_1} \frac{720 - a(t)}{\log^7 t} \, \text{d}t - 2\sum_{k=1}^5 \frac{k!}{\log^{k+1}2}, \\
C_1 &= \int_2^{x_1} \frac{720 + a(t)}{\log^7 t} \, \text{d}t - 2\sum_{k=1}^5 \frac{k!}{\log^{k+1}2}. 
\end{align*}
Contrary to Dudek and Platt \cite{dudekplatt}, who have estimated the integral
\begin{displaymath}
\int_2^x \frac{\text{d}t}{\log^7t},
\end{displaymath}
we will use the identity
\begin{equation}
\int_{x_1}^x \frac{\text{d}t}{\log^7t} = E(x) - E(x_1), \tag{2.4} \label{2.4}
\end{equation}
where
\begin{displaymath}
E(x) = \frac{1}{720} \left( \text{li}(x) - \frac{x}{\log x} - \frac{x}{\log^2x} - \frac{2x}{\log^3x} - \frac{6x}{\log^4x} - \frac{24x}{\log^5x} - \frac{120x}{\log^6x}\right),
\end{displaymath}
to find the following explicit version of \eqref{2.3}.

\begin{lem} \label{lem203}
Let
\begin{displaymath}
m_a(x) = 120 - a(x) + \left( C_0 + (720 - a(x_1))(E(x) - E(x_1)) \right) \frac{\log^6x}{x}
\end{displaymath}
and 
\begin{displaymath}
M_a(x) = 120 + a(x) + \left( C_1 + (720 + a(x_1))(E(x) - E(x_1)) \right) \frac{\log^6x}{x}.
\end{displaymath}
Then
\begin{displaymath}
x\sum_{k=0}^4 \frac{k!}{\log^{k+1}x} + \frac{m_a(x)x}{\log^6x} \leq \pi(x) \leq x\sum_{k=0}^4 \frac{k!}{\log^{k+1}x} + \frac{M_a(x)x}{\log^6x}
\end{displaymath}
for every $x \geq x_1$.
\end{lem}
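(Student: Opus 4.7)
The plan is to start from identity \eqref{2.1} and separate the main term from the $\vartheta$-dependent remainder. Writing $\vartheta(t)=t+(\vartheta(t)-t)$ in both the boundary term and the integrand yields
\begin{displaymath}
\pi(x)=\frac{x}{\log x}+\int_2^x\frac{\text{d}t}{\log^2 t}+\frac{\vartheta(x)-x}{\log x}+\int_2^x\frac{\vartheta(t)-t}{t\log^2 t}\,\text{d}t.
\end{displaymath}
To convert the second term into the explicit expansion appearing in the statement, I would iterate the integration-by-parts identity $\int \text{d}t/\log^n t=t/\log^n t+n\int \text{d}t/\log^{n+1} t$ five times, starting from $n=2$. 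This produces
\begin{displaymath}
\int_2^x\frac{\text{d}t}{\log^2 t}=x\sum_{k=2}^6\frac{(k-1)!}{\log^k x}-2\sum_{k=2}^6\frac{(k-1)!}{\log^k 2}+720\int_2^x\frac{\text{d}t}{\log^7 t}.
\end{displaymath}
After adding the leading $x/\log x$ and re-indexing $j=k-1$, the first sum becomes $x\sum_{k=0}^4 k!/\log^{k+1}x+120x/\log^6 x$, and the boundary sum at $t=2$ becomes $-2\sum_{k=1}^5 k!/\log^{k+1}2$, which is precisely the constant sitting inside both $C_0$ and $C_1$.

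For the upper bound, Lemma \ref{lem201} gives $\vartheta(x)-x\le a(x)x/\log^5 x$, contributing $a(x)$ to the coefficient of $x/\log^6 x$ via the boundary term; on the integrand side, the same inequality yields $|\vartheta(t)-t|/(t\log^2 t)\le a(t)/\log^7 t$, so the two remainder integrals merge as
\begin{displaymath}
720\int_2^x\frac{\text{d}t}{\log^7 t}+\int_2^x\frac{\vartheta(t)-t}{t\log^2 t}\,\text{d}t\le\int_2^x\frac{720+a(t)}{\log^7 t}\,\text{d}t.
\end{displaymath}
Splitting this integral at $x_1$, applying Lemma \ref{lem202} (so $a(t)\le a(x_1)$ on $[x_1,x]$) to get $\int_{x_1}^x(720+a(t))/\log^7 t\,\text{d}t\le(720+a(x_1))\int_{x_1}^x\text{d}t/\log^7 t$, and rewriting the last integral via \eqref{2.4} as $E(x)-E(x_1)$, the $\int_2^{x_1}$ piece combines with the $t=2$ boundary constant to form exactly $C_1$. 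Collecting everything multiplied by $x/\log^6 x$ gives the coefficient $120+a(x)+(\log^6 x/x)(C_1+(720+a(x_1))(E(x)-E(x_1)))=M_a(x)$.

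The lower bound is proved by running the identical calculation with the opposite inequality $\vartheta(t)-t\ge -a(t)t/\log^5 t$ throughout, and using $720-a(t)\ge 720-a(x_1)$ on $[x_1,x]$ (which is again the correct direction, since Lemma \ref{lem202} bounds $a$ from above by $a(x_1)$). The only genuine obstacle is bookkeeping: carrying the boundary data at $t=2$ accurately through the five integrations by parts, and checking at each step that the monotonicity of $a$ on $[x_1,x]$ is invoked in the direction matching the sign of $\pm a(t)$ in the integrand. No deeper analytic difficulty is expected.
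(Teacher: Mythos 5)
Your proposal is correct and follows essentially the same route as the paper: substitute the bound $|\vartheta(t)-t|\leq a(t)t/\log^5 t$ into the identity \eqref{2.1}, expand $\int_2^x \text{d}t/\log^2 t$ by repeated integration by parts down to $720\int_2^x \text{d}t/\log^7 t$, split that integral at $x_1$, apply Lemma \ref{lem202} on $[x_1,x]$, and finish with the identity \eqref{2.4}. You supply more detail than the paper does (the paper only sketches the upper bound and leaves the lower bound to the reader), and your sign check that $720-a(t)\geq 720-a(x_1)$ is exactly the point that makes the lower-bound direction work.
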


\begin{proof}
We only give a proof of the required upper bound. The proof of the required lower bound is similar and we leave the details to the reader. Let $x \geq x_1$. If we combine \eqref{2.1} with Lemma \ref{lem201}, we can see that
\begin{displaymath}
\pi(x) \leq \frac{x}{\log x} + \frac{xa(x)}{\log^6x} + \int_2^x \frac{\text{d}t}{\log^2 t} + \int_2^x \frac{a(t)}{\log^7 t} \, \text{d}t.
\end{displaymath}
Integration by parts in \eqref{1.3} provides that
\begin{displaymath}
\pi(x) \leq x\sum_{k=0}^4 \frac{k!}{\log^{k+1}x} + \frac{(120+a(x))x}{\log^6x} + C_1 + \int_{x_1}^x \frac{720 + a(t)}{\log^7 t} \, \text{d}t.
\end{displaymath}
Since $a(t) \leq a(x_1)$ for every $t$ with $x_1 \leq t \leq x$ (cf. Lemma \ref{lem202}), it turns out that
\begin{displaymath}
\pi(x) \leq x\sum_{k=0}^4 \frac{k!}{\log^{k+1}x} + \frac{(120+a(x))x}{\log^6x} + C_1 + (720 + a(x_1))\int_{x_1}^x \frac{\text{d}t}{\log^7 t}.
\end{displaymath}
Finally, it suffices to apply the identity \eqref{2.4}.
\end{proof}

\section{Proof of Theorem \ref{thm101}}

Now we have all the necessary tools to give a proof of Theorem \ref{thm101}.

\begin{proof}[Proof of Theorem \ref{thm101}]
Let $x_1 = \exp(3157.442)$. Then, one has $a(x_1) = 1056.767676\ldots$ and $E(x_1) > 0$. Since the function $E(x)\log^6(x)/x$ is decreasing on the interval $[x_1, \infty)$ and $a(x) \leq a(x_1)$ for every $x \geq x_1$ (cf. Lemma \ref{lem202}), we can use Lemma \ref{lem203} and a computer to get that
\begin{displaymath}
x\sum_{k=0}^4 \frac{k!}{\log^{k+1}x} + \frac{m_a x}{\log^6x} \leq \pi(x) \leq x\sum_{k=0}^4 \frac{k!}{\log^{k+1}x} + \frac{M_a x}{\log^6x}
\end{displaymath}
for every $x \geq x_1$, where
\begin{align*}
m_a &= -936.64603213534,\\
M_a &= 1177.56019022252.
\end{align*}
Now we can argue as in the proof of \cite[Lemma 2.1]{dudekplatt} to see that
\begin{equation}
\pi^2(x) - \frac{ex}{\log x}\, \pi \left( \frac{x}{e} \right) < \frac{x^2}{\log^6x} \left( -1 + \frac{\epsilon_{M_a}(x) - \epsilon_{m_a}(x)}{\log x} \right) \tag{3.1} \label{3.1}
\end{equation}
for every $x \geq ex_1$, where
\begin{align*}
\epsilon_{m_a}(x) &= 206 + m_a + \frac{364}{\log x} + \frac{381}{\log^2x} + \frac{238}{\log^3x} + \frac{97}{\log^4x} + \frac{30}{\log^5x} + \frac{8}{\log^6x}, \\
\epsilon_{M_a}(x) &= 72 + 2M_a + \frac{2M_a + 132}{\log x} + \frac{4M_a + 288}{\log^2 x} + \frac{12M_a + 576}{\log^3 x} + + \frac{48M_a}{\log^4 x} + \frac{M_a^2}{\log^5x}.
\end{align*}
Note that $\epsilon_{M_a}(x) - \epsilon_{m_a}(x) < \log x$ for every $x \geq ex_1 = \exp(3158.442)$. Finally, it suffices to substitute the last inequality into \eqref{3.1} and we arrive at the end of the proof.
\end{proof}

\section{Future work}
It is natural to ask whether we can derive comparable results if we replace the number $e$ in \eqref{1.6} by an arbitrary positive real number $\alpha$. In this context, Hassani \cite[Theorem 3]{hassani2} was able to show that if $\alpha > e$ then one has
\begin{displaymath}
\pi(x)^2 < \frac{\alpha x}{\log x} \pi \left( \frac{x}{\alpha} \right) \tag{4.1} \label{4.1}
\end{displaymath}
for all sufficiently large values of $x$ and if $0 < \alpha < e$, then the above inequality reverses. One could inverstigated a method, similar to the one we used in the proof of Theorem \ref{thm101}, to find effective estimates for the smallest positive integer $N_{\mathcal{R}, \alpha}$ so that the inequality \eqref{4.1} holds for every real $x \geq N_{\mathcal{R}, \alpha}$.

\section*{Acknowledgement}

The author wishes to thank the two beautiful souls R. and O. for the never ending inspiration.


\begin{thebibliography}{10}
\bibitem{ap} T. Apostol, {\it Introduction to Analytic Number Theory}, Springer, New York--Heidelberg, 1976.
\bibitem{axler} C. Axler, \textit{Estimates for $\pi(x)$ for large values of $x$ and Ramanujan's prime counting inequality}, Integers \textbf{18} (2018), Paper No. A61, 14 pp.
\bibitem{berndt1994} B. C. Berndt, {\it Ramanujan's Notebooks, Part IV}, Springer, New York, 1994.
\bibitem{buethe} J. Büthe, \textit{An analytic method for bounding $\psi(x)$}, Math. Comp. \textbf{87} (2018), no. 312, 1991--2009.
\bibitem{cully} M. Cully-Hugill and D. R. Johnston, \textit{On the error term in the explicit formula of Riemann--von Mangoldt}, preprint, 2021. Available at \url{arxiv.org/abs/2111.10001}.
\bibitem{vallee1896} C.-J. de la Vallée Poussin, \textit{Recherches analytiques la th\'{e}orie des nombres premiers}, Ann. Soc. scient. Bruxelles \textbf{20} (1896), 183--256.
\bibitem{vallee1899} C.-J. de la Vallée Poussin, \textit{Sur la fonction $\zeta(s)$ de Riemann et le nombre des nombres premiers inf\'{e}rieurs \`{a} une limite donn\'{e}e}, Mem. Couronn\'{e}s de l'Acad. Roy. Sci. Bruxelles \textbf{59} (1899), 1--74.
\bibitem{dudekplatt} A. W. Dudek and D. J. Platt, \textit{On solving a curious inequality of Ramanujan}, Exp. Math. {\bf 24} (2015), no. 3, 289-294.
\bibitem{kadiri} A. Fiori, H. Kadiri, and J. Swindisky, \textit{Sharper bounds for the error term in the Prime Number Theorem}, preprint, 2022. Available at \url{arxiv.org/abs/2206.12557}.
\bibitem{gauss} C. F. Gauss, \textit{Werke}, 2 ed., Königlichen Gesellschaft der Wissenschaften, Göttingen, 1876.
\bibitem{hadamard1896} J. Hadamard, \textit{Sur la distribution des z\'{e}ros de la fonction $\zeta(s)$ et ses cons\'{e}quences arithm\'{e}tiques}, Bull.Soc. Math. France \textbf{24} (1896), 199--220. 
\bibitem{hassani2012} M. Hassani, \textit{On an inequality of Ramanujan concerning the prime counting function}, Ramanujan J. {\bf 28} (2012), no. 3, 435-442.
\bibitem{hassani2} M. Hassani, \textit{Remarks on Ramanujan's inequality concerning the prime counting function}, Commun. Math. \textbf{29} (2021), no. 3, 473--482.
\bibitem{johnston} D. R. Johnston, \textit{Improving bounds on prime counting functions by partial verification of the Riemann hypothesis}, preprint, 2021. Available at \url{arxiv.org/abs/2109.02249}.
\bibitem{yang} D. R. Johnston and A. Yang, \textit{Some explicit estimates for the error term in the prime number theorem}, preprint, 2022. Available at \url{arxiv.org/abs/2204.01980}.
\bibitem{plattt} D. J. Platt and T. S. Trudgian, \textit{The error term in the prime number theorem}, Math. Comp. \textbf{90} (2021), no. 328, 871-–881.
\bibitem{trud} T. S. Trudgian, \textit{Updating the error term in the prime number theorem}, Ramanujan J. {\bf 39} (2016), no. 2, 225-234.
\end{thebibliography}
\end{document}